\newcommand{\ee}{\mathrm{e}}
\newcommand\rev[1]{{#1}}
\begin{document}

\title{Non-satisfiability
of a positivity condition for commutator-free exponential integrators
of order higher than four\thanks{This work has been
supported in part by \rev{the Austrian Science Fund (FWF) under grant P30819-N32 and}
the Vienna Science and Technology Fund (WWTF) under grant MA14-002.}
}

\titlerunning{A positivity condition for exponential integrators}        

\author{Harald Hofst{\"a}tter \and Othmar Koch}


\institute{Harald Hofst{\"a}tter \at
              Universit{\"a}t Wien, Institut f{\"u}r Mathematik \\
              Oskar-Morgenstern-Platz 1, A-1090 Wien, Austria\\
              \email{hofi@harald-hofstaetter.at}
           \and
           Othmar Koch \at
              Universit{\"a}t Wien, Institut f{\"u}r Mathematik \\
              Oskar-Morgenstern-Platz 1, A-1090 Wien, Austria\\
              \email{othmar@othmar-koch.org}
}

\date{Received: date / Accepted: date}

\journalname{Numer. Math.}

\sloppy

\maketitle

\begin{abstract}
We consider commutator-free exponential integrators
\rev{as put forward in [Alverman, A., Fehske, H.: High-order commutator-free exponential
  time-propagation of driven quantum systems. J. Comput. Phys. \textbf{230}, 5930--5956 (2011)]}.
For parabolic problems, it is important for the well-definedness that
such an integrator satisfies a positivity condition such that essentially
it only proceeds forward in time. We \rev{prove} that this
requirement implies maximal convergence order of four for real
coefficients, \rev{which has been conjectured earlier by other authors}.
\keywords{Parabolic evolution equations \and Commutator-free exponential integrators \and
Real coefficients \and Order barrier}
\subclass{65L05}
\end{abstract}

\section{Introduction}

Commutator-free exponential integrators are
effective methods for the numerical solution of non-autonomous evolution equations of
the form
\begin{equation}\label{eq:problem}
u'(t)=A(t)u(t),\quad u(t_0)=u_0,\quad t\in[t_0,T],
\end{equation}
where $A(t)\in\mathbb{C}^{d\times d}$,
usually with large dimension $d$, see
\cite{alvfeh11,alvermanetal12,blacastha17,SergioFernandoMPaper1}.
These methods compute approximations $\{u_n\}$  to the solution of (\ref{eq:problem}) on a grid $\{t_n=t_0+n\tau\}$ with step-size $\tau$ \rev{as}
\begin{equation}\label{eq:int_step_general}
u_{n+1} = \ee^{\tau B_J(t_n,\tau)}\cdots\ee^{\tau B_1(t_n,\tau)}u_n,
\qquad n=0,1,\dots.
\end{equation}
Here,
\begin{equation}\label{eq:defB}
\qquad B_j(t_n,\tau)= \sum_{k=1}^{K}a_{j,k}A(t_n+c_{k}\tau),\quad j=1,\dots,J
\end{equation}
with coefficients $a_{j,k}, c_{k}\in\mathbb{R},\ j=1,\dots,J,\ k=1,\dots,K$  chosen in such a way, that the exponential integrator has a certain convergence order $p\geq 1$,
\begin{equation*}
\|u_n-u(t_n)\| = O(\tau^p).
\end{equation*}
Usually the nodes $c_{k}$ are chosen as the nodes
of a quadrature formula of order $p$.
It is assumed that the matrix exponentials applied to some vector, $\ee^{\tau B_j(t_n,\tau)}v$, can be evaluated efficiently.
\rev{For large problems, this is commonly realized by Krylov methods like the Lanczos
iteration, see for instance~\cite{parlig86,saad92s}, or polynomial or rational approximations \cite{molloa03}.}

The numerical solution of large linear systems of the type (\ref{eq:problem}) has been
extensively studied in the literature. Attention has recently
focussed on \emph{commutator-free methods}
(\ref{eq:int_step_general}), see for instance \cite{blamoa05}.
Earlier mathematical work has centered around the construction of commutator-free methods
to supplement classical Magnus integrators based on commutators.
Commutator-free methods are convenient
to evaluate without storing excessive intermediate results, where the optimal
balance between computational effort and accuracy is sought.
Already  in \cite{blamoa05}, the coefficients for high-order commutator-free
methods were derived based on nonlinear optimization of the free parameters in the order
conditions to minimize local error constants.
With this objective, methods of orders 4--8 were constructed in
\cite{alvfeh11}.

\rev{Alternative approaches to the construction of favorable integrators based on the
evaluation of exponentials rely on the Magnus expansion \cite{isenor99,magnus54}.
In \cite{blanesetal08b} the algebraic framework underlying a systematic construction
of classical Magnus integrators is discussed.
}
\rev{Yet another interesting approach was applied to the Schr{\"o}dinger equation in \cite{kropi14,baderetal17},
where all the computations are performed in the Lie algebra. This
leads to the derivation of exponential integrators in  \cite{baderetal17}.
Unconventional schemes similar to \eqref{eq:int_step_general} but containing in addition
commutators have been proposed in \cite{blacastha17}.}
\rev{Note that commutator-free methods were already considered
in the context of Lie group methods for nonlinear problems on manifolds for example in
\cite{celledoni05,celmarowr03,owren06}. Our analysis applies to the class of methods
(\ref{eq:int_step_general}), a possible extension to other Lie group methods
is not considered here, but may be a topic for future research.}

For $A(t)$ with purely imaginary eigenvalues it does not matter whether some
of the coefficients $a_{j,k}$ \rev{in \eqref{eq:defB}} are negative. This holds
for $A(t)$ anti-hermitian (i.e., for $\frac{1}{\mathrm{i}}A(t)$
hermitian), which usually is the case if (\ref{eq:problem}) is a
(space-discretized) Schr\"odinger equation, see
 \cite{alvfeh11,alvermanetal12}.

On the other hand, for
$A(t)$ with negative real eigenvalues of large modulus, which, e.g.,  is the case if
(\ref{eq:problem}) is a spatially semi-discretized \rev{sectorial operator associated with a} parabolic equation,
poor stability is to be expected if some of the coefficients $a_{j,k}$  are negative.
More specifically, the analysis given in \cite{SergioFernandoMPaper1} shows that
commutator-free exponential integrators applied to
evolution equations of parabolic type are well-defined and stable only if their coefficients
satisfy the {\em positivity condition}
\begin{equation}\label{eq:positivity}
b_{j} = \sum_{k=1}^{K}a_{j,k} >0, \quad j=1,\dots,J.
\end{equation}
In all examples of schemes given in
\cite{alvfeh11,alvermanetal12,blacastha17,SergioFernandoMPaper1},
which involve real coefficients only and are of order higher than four, this condition is not satisfied. In \cite{SergioFernandoMPaper1} it is conjectured that no such schemes exist.
However, no proof is given there. It is the purpose of this paper to give such a proof\footnote{\rev{Note that the
situation is similar to that encountered for \emph{exponential splitting methods}. For this class of
time integrators, no methods of order greater than two exist with only positive real coefficients.
This was first shown in \cite{sheng86}, see also \cite{golkap96}. The ensuing instability can be
avoided by splitting with complex coefficients \cite{Hansen09}, see also \cite{Castella09}.
Splitting methods of high order with complex coefficients have been constructed for example in \cite{BCCM2011}.
Similarly, for exponential commutator-free Magnus-type methods, stable high-order schemes with complex coefficients
have been derived in \cite{blacastha17}.
Moreover, unconventional schemes involving additionally evaluation of some commutators are introduced
there which are stable for parabolic problems.
An error analysis of high-order commutator-free exponential integrators
applied to semi-discretizations of parabolic problems is given in \cite{SergioFernandoMPaper1}.
}}.

\section{Main Result}

\begin{theorem} If $p\geq 5$, then no
commutator-free exponential integrator {\rm (\ref{eq:int_step_general})}, {\rm (\ref{eq:defB})} of
convergence order $p$ involving only real coefficients $a_{j,k}$  exists which satisfies the positivity condition
{\rm (\ref{eq:positivity})}.
\end{theorem}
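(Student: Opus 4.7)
My plan is to recast the order conditions as a finite system of polynomial equations in the moments $\alpha_{j,m} = \sum_{k=1}^{K} a_{j,k} c_k^m$ (noting $\alpha_{j,0} = b_j$) and then to search for a sign-based contradiction with the hypothesis $b_j > 0$. First I would Taylor-expand $A(t_n + c_k \tau)$ about $t_n$ to obtain
\[
B_j = \sum_{m \geq 0} \frac{\tau^m}{m!}\, \alpha_{j,m}\, A^{(m)}(t_n),
\]
and realise the scheme $\ee^{\tau B_J}\cdots\ee^{\tau B_1}$ as the flow at time $J\tau$ of the piecewise-constant operator $\tilde A(s) = B_j$ on $s \in [(j-1)\tau, j\tau]$. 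Both the scheme and the exact flow then admit Magnus expansions; matching their exponents modulo $\tau^{p+1}$ yields the order conditions, indexed by a Hall basis of Lie words in the $A^{(m)}$'s.

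Next I would enumerate the Lie words of weight at most $5$, where $A^{(m)}$ carries weight $m+1$. There are thirteen such words, in four groups: the five singletons $A^{(0)}, \ldots, A^{(4)}$, yielding the quadrature identities $\sum_j \alpha_{j,m} = 1/(m+1)$; four binary brackets $[A^{(0)},A^{(m)}]$ for $m=1,2,3$ together with $[A^{(1)},A^{(2)}]$; three triple brackets $[A^{(0)},[A^{(0)},A^{(1)}]]$, $[A^{(0)},[A^{(0)},A^{(2)}]]$, $[A^{(1)},[A^{(0)},A^{(1)}]]$; and the single quadruple bracket $[A^{(0)},[A^{(0)},[A^{(0)},A^{(1)}]]]$. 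For each I would read off the exact Magnus coefficient and the scheme-side polynomial in the $\alpha_{j,m}$'s produced by the Baker--Campbell--Hausdorff formula for the product of $J$ exponentials, paying particular attention to words whose scheme-side expression is of high degree in the $b_j$'s alone. The aim is to combine a judicious subset of these conditions into an identity of the form
\[
\sum_{j} F_j(b_1,\ldots,b_J) = C,
\]
whose left-hand side is manifestly sign-definite whenever all $b_j>0$ but whose right-hand side has the opposite sign---in effect a sum-of-squares certificate that the order-$5$ system is infeasible on the positive orthant $\{b_j > 0\}$.

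The main obstacle I anticipate is precisely this last step. The scheme-side coefficients of the $3$- and $4$-fold brackets produced by BCH for a product of $J$ exponentials are combinatorially delicate, and isolating the decisive sign-definite combination among the coupled order conditions is the real content of the proof. In practice I would expect to rely on symbolic computation to discover the correct identity, and I would not rule out that the actual argument instead specialises to a cleverly chosen low-dimensional test problem (for instance a polynomial $A(t)$ of low degree acting on a small non-commuting matrix pair) which collapses the order-$5$ system to a tractable subsystem amenable to direct sign analysis. A secondary concern is choosing the Hall basis carefully so that no independent constraint is inadvertently missed or double-counted.
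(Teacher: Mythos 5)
Your proposal is a plan rather than a proof: the setup (recasting the order conditions as polynomial equations in the moments $\alpha_{j,m}$ and seeking infeasibility over the orthant $b_j>0$) matches the paper's starting point, and your parenthetical guess that the argument might ``specialise to a cleverly chosen low-dimensional test problem'' is exactly what the paper does --- it restricts to $A(t)=A_0+tA_1$, so that only the two moments $b_j=\alpha_{j,0}$ and $y_j=\alpha_{j,1}$ survive and the order-$5$ conditions collapse to the six equations (\ref{eq:sum_b})--(\ref{eq:eq_quad}). But the step you yourself flag as ``the real content of the proof'' is genuinely missing, and the shape you anticipate for it cannot work as stated. A certificate of the form $\sum_j F_j(b_1,\dots,b_J)=C$ with sign-definite left-hand side \emph{in the $b_j$ alone} does not exist: the only constraint on the $b_j$ by themselves is $\sum_j b_j=1$, which is trivially satisfiable with all $b_j>0$. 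The obstruction necessarily lives in the interplay between the $b_j$ and the free parameters $y_j$.

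What the paper actually does at this point is geometric. The surviving conditions comprise two equations \emph{linear} in $y=(y_1,\dots,y_J)^T$, namely $e^Ty=\tfrac12$ and $d^Ty=\tfrac15$ with $d_j=\widehat b_j^3+\tfrac14\widehat b_j b_j^2$, and one equation \emph{quadratic} in $y$, namely $y^TSy=\tfrac1{20}$ with $S=L^TDL+\tfrac1{12}D$ symmetric positive definite whenever all $b_j>0$. The quadratic condition thus defines a bounded ellipsoid, the two linear conditions a codimension-two affine subspace, and a standard Gram-matrix criterion (Lemma~\ref{lemma:geometric}) reduces incompatibility to the scalar inequality $c^T\Gamma^{-1}c>\tfrac1{20}$ with $c=(\tfrac12,\tfrac15)^T$ and $\Gamma$ the Gram matrix of $e,d$ in the $S^{-1}$ inner product. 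This in turn is reduced to the single inequality $(\sigma^3e-d)^TS^{-1}(\sigma^3e-d)<\tfrac95\sigma^5$ with $\sigma=\sum_j b_j$, proved by induction on $J$ via block-inverse and Sherman--Morrison updates of $S^{-1}$; the induction step discards a manifestly negative square, so the final certificate is SOS-like in spirit, but it is an inequality in $(b,y)$-space obtained after eliminating $y$, not an identity in the $b_j$. None of this machinery --- the identification of which order conditions to keep, the positive-definiteness of $S$, the ellipsoid/affine-subspace disjointness criterion, and the inductive inequality --- is present in your proposal, so the central claim remains unestablished.
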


\begin{proof}
It is sufficient to consider only problems of the special form
\begin{equation}\label{eq:special_problem}
u'(t)=(A_0+tA_1)u(t),\quad u(0)=u_0,
\end{equation}
where $A_0, A_1\in\mathbb{C}^{d\times d}$.
For such problems we have
\begin{equation}\label{eq:int_step}
\quad B_j(t_n, \tau)= b_{j}(A_0+t_nA_1)+\tau y_{j}A_1\quad
\mbox{with}\quad y_{j} = \sum_{k=1}^{K}{\rev{a_{j,k}}}c_k,
\quad j=1,\dots,J,
\end{equation}
and order conditions for the numerical solution to be of convergence order at least $p=5$ are given by
\begin{eqnarray}
&&\sum_{j=1}^{J}b_{j} =1,\label{eq:sum_b}\\
&&\sum_{j=1}^{J}y_{j} =\frac{1}{2},\label{eq:eq_lin_e}\\
&&\sum_{j=1}^{J}\widehat{b}_{j}y_{j} =\frac{1}{3},\nonumber\\
&&\sum_{j=1}^{J}\big(\widehat{b}_{j}^2+\frac{1}{12}b_{j}^2\big)y_{j}=\frac{1}{4},\nonumber\\
&&\sum_{j=1}^{J}\big(\widehat{b}_{j}^3+\frac{1}{4}\widehat{b}_{j}b_{j}^2\big)y_{j}=\frac{1}{5},\label{eq:eq_lin_d}\\
&&\sum_{j=1}^{J}\big(\widehat{y}_{j}^2+\frac{1}{12}y_{j}^2\big)b_{j}=\frac{1}{20},\label{eq:eq_quad}
\end{eqnarray}
where
\begin{equation*}
\widehat{b}_{j} = \sum_{k=1}^{j}b_{k}-\frac{1}{2}b_{j}\quad \mbox{and}\quad
\widehat{y}_{j} = \sum_{k=1}^{j}y_{k}-\frac{1}{2}y_{j},\quad j=1,\dots,J.
\end{equation*}
A proof of these order conditions is given in Appendix~\ref{apdx:proof_order_conditions}.
We will show that the system consisting of equations
(\ref{eq:sum_b}), (\ref{eq:eq_lin_e}), (\ref{eq:eq_lin_d}), (\ref{eq:eq_quad})
has no solution with $b_{j}\in\mathbb{R}_{>0}$ positive and $y_{j}\in\mathbb{R}$ arbitrary ($j=1,\dots,J$).
It is clear that these equations have no solution for $J=1$, we thus
assume $J\geq 2$.
We will treat (\ref{eq:eq_lin_e}) and (\ref{eq:eq_lin_d}) as {linear} equations
and (\ref{eq:eq_quad}) as a {quadratic} equation in the variables
$y_{j}$ and with coefficients depending on the parameters $b_{j}$ subject to the constraint (\ref{eq:sum_b}).

We define vectors
\begin{equation*}
y = (y_1\dots,y_J)^T, \quad
e = (1,\dots,1)^T, \quad
d = \big(\widehat{b}_{1}^3+\frac{1}{4}\widehat{b}_{1}b_{1}^2,
 \dots,\widehat{b}_{J}^3+\frac{1}{4}\widehat{b}_{J}b_{J}^2\big)^T
\end{equation*}
in $\mathbb{R}^{J}$ and matrices
\begin{equation*}
L=\left(
    \begin{array}{ccccc}
    \frac{1}{2}                                    \\
    1  & \frac{1}{2}               &   & \text{\huge0}\\
    1  & 1              & \ddots            \\
    \vdots  & \vdots &  \ddots & \frac{1}{2}            \\
   1   &  1             &  \cdots &  1  & \frac{1}{2}
    \end{array}
\right), \quad
D = \mbox{diag}(b_1,\dots,b_J), \quad
S = L^TDL+\frac{1}{12}D
\end{equation*}
in $\mathbb{R}^{J\times J}$. Then equations  (\ref{eq:eq_lin_e}), (\ref{eq:eq_lin_d}), (\ref{eq:eq_quad})
can be written as
\begin{equation}\label{eq:eqs_y}
e^Ty=\frac{1}{2},\quad d^Ty=\frac{1}{5},\quad
y^TSy=\frac{1}{20},
\end{equation}
respectively.
For the remainder of the proof we will assume that $b_j>0$, $j=1,\dots,J$. We will show that under this assumption the system
of equations (\ref{eq:eqs_y}) has no solution $y\in\mathbb{R}^{J}$.
It is easily seen that for $b_j>0$, $S$ is symmetric positive definite and thus the equation
$y^TSy=\frac{1}{20}$ defines a bounded quadric (a ``hyper-ellipsoid'') in $\mathbb{R}^J$.
Furthermore, a straightforward calculation shows that $\{e,d\}$ is linearly independent for $b_j>0$ and $J\geq 2$,
\rev{since in this case $d_1\neq d_2$}.
From Lemma~\ref{lemma:geometric} in Appendix~\ref{apdx:geometric_lemma} it follows
that the intersection of the two hyperplanes given by the equations
$e^Ty=\frac{1}{2}$ and $d^Ty=\frac{1}{5}$ does not intersect the quadric defined by
$y^TSy=\frac{1}{20}$ if and only if
\begin{equation}\label{eq:cond_with_gamma}
c^T\Gamma^{-1}c>\frac{1}{20},
\end{equation}
where $c=(\frac{1}{2},\frac{1}{5})^T$ and $\Gamma$ denotes the Gram matrix
\begin{equation*}
\Gamma=\left(\begin{array}{cc}e^TS^{-1}e & e^TS^{-1}d \\ e^TS^{-1}d & d^TS^{-1}d \end{array}\right).
\end{equation*}
Condition (\ref{eq:cond_with_gamma}) is equivalent to
\begin{equation}\label{eq:cond1}
(2e-5d)^TS^{-1}(2e-5d)-5\det\Gamma>0,
\end{equation}
which is equivalent to
\begin{equation}\label{eq:cond2}
\frac{\big((e-d)^TS^{-1}(2e-5d)\big)^2+\big(9-5(e-d)^TS^{-1}(e-d)\big)\det\Gamma}{(e-d)^TS^{-1}(e-d)}>0.
\end{equation}
\rev{Indeed the left-hand-sides of (\ref{eq:cond1}) and (\ref{eq:cond2}) are equal, which
is readily verified by a straightforward calculation}.
In (\ref{eq:cond2}) the denominator is positive because $S$ and thus $S^{-1}$ is positive \rev{definite}.
Clearly also the first term of the numerator is positive as is the Gram determinant $\det\Gamma$ for $J\geq 2$.
Note that the positivity of these terms does not depend on the constraint (\ref{eq:sum_b}).

We will now show that
\begin{equation}\label{eq:shitty_ineq}
(\sigma^3e-d)^TS^{-1}(\sigma^3e-d) < \frac{9}{5}\sigma^5,\quad\mbox{where}\quad \sigma=\sum_{j=1}^{J}b_j,
\end{equation}
from which (\ref{eq:cond2}) follows for $\sigma=1$ (i.e., if (\ref{eq:sum_b}) is satisfied),
which will conclude the proof of the theorem.
We proceed using induction on the number $J$ of exponentials.

{\em Base case.} For $J=1$ we have
\begin{equation*}
\sigma=b_1,\quad d=\frac{b_1^3}{4},\quad S= \frac{b_1}{3},\quad S^{-1}=\frac{3}{b_1},
\end{equation*}
such that
\begin{equation*}
(\sigma^3e-d)^TS^{-1}(\sigma^3e-d) \ = \ \left(b_1^3-\frac{b_1^3}{4}\right)^2\cdot\frac{3}{b_1}
\ = \ \frac{27}{16}b_1^5 \ < \ \frac{9}{5}\sigma^5.
\end{equation*}

{\em Inductive step $J\to J+1$.}
Objects with a tilde belong to the $(J+1)$-case, those without a tilde to the
$J$-case such that
\begin{equation*}
\widetilde{\sigma} = \sigma+b_{J+1},\quad
 \widetilde{e} = \left(\begin{array}{c} e\\1\end{array}\right),\quad
 \widetilde{d} = \left(\begin{array}{c} d\\d_{J+1}\end{array}\right), \quad \mbox{etc.}
\end{equation*}
Applying
\begin{equation*}
\left(\begin{array}{cc} A & u \\ u^T & \delta \end{array}\right)^{-1} =
\left(\begin{array}{cc} \displaystyle(A-\frac{1}{\delta}uu^T)^{-1} &\displaystyle \frac{-1}{\delta-u^TA^{-1}u}A^{-1}u
\\[12pt]
\displaystyle\frac{-1}{\delta-u^TA^{-1}u}u^TA^{-1} &\displaystyle\frac{1}{\delta-u^TA^{-1}u} \end{array}\right)
\end{equation*}
(see \cite[eq.~(7.7.5)]{horn85})
and the Sherman-Morrison formula
\begin{equation*}
(A+uv^T)^{-1}=A^{-1}-\frac{1}{1+v^TA^{-1}u}A^{-1}uv^TA^{-1}
\end{equation*}
(see \cite[\S 0.7.4)]{horn85})
to
\begin{equation*}
\widetilde{S} = \left(\begin{array}{cc}
\displaystyle S+b_{J+1}ee^T &\displaystyle\frac{1}{2}b_{J+1}e \\[12pt]
\displaystyle\frac{1}{2}b_{J+1}e^T &\displaystyle \frac{1}{3}b_{J+1}
\end{array}\right)
\end{equation*}
we obtain
\begin{equation*} 
\widetilde{S}^{-1} = \left(\begin{array}{cc} \displaystyle
S^{-1}-\frac{b_{J+1}}{4+b_{J+1}e^TS^{-1}e}S^{-1}ee^TS^{-1} &
\displaystyle\frac{-6}{4+b_{J+1}e^TS^{-1}e}S^{-1}e \\[12pt]
\displaystyle\frac{-6}{4+b_{J+1}e^TS^{-1}e}e^TS^{-1} &
\displaystyle\frac{12}{b_{J+1}}\cdot\frac{1+b_{J+1}e^TS^{-1}e}{4+b_{J+1}e^TS^{-1}e}
\end{array}\right).
\end{equation*}
It follows
\begin{eqnarray}
&&(\widetilde{\sigma}^3\widetilde{e}-\widetilde{d})^T\widetilde{S}^{-1}(\widetilde{\sigma}^3\widetilde{e}-\widetilde{d})\ = \ (\widetilde{\sigma}^3e-d)^TS^{-1}(\widetilde{\sigma}^3e-d) \nonumber\\
&&\qquad+\frac{b_{J+1}}{4+b_{J+1}e^TS^{-1}e}\left(
-\big(e^TS^{-1}(\widetilde{\sigma}^3e-d)\big)^2
-12\frac{\widetilde{\sigma}^3-d_{J+1}}{b_{J+1}}e^TS^{-1}(\widetilde{\sigma}^3e-d)\right.\nonumber\\
&&\qquad\qquad\left.+12\frac{(\widetilde{\sigma}^3-d_{J+1})^2}{b_{J+1}^2}(1+b_{J+1}e^TS^{-1}e) \label{eq:expr3}
\right).
\end{eqnarray}
Substituting
\begin{equation*}
\widetilde{\sigma}^3 
=\sigma^3 +b_{J+1}^3+3b_{J+1}^2(\widetilde{\sigma}-b_{J+1})
+3b_{J+1}(\widetilde{\sigma}-b_{J+1})^2
\end{equation*}
in the first term $(\widetilde{\sigma}^3e-d)^TS^{-1}(\widetilde{\sigma}^3e-d)$ only,
and
\begin{equation*}
d_{J+1} = \widetilde{\sigma}^3-\frac{3}{2}\widetilde{\sigma}^2b_{J+1}+
\widetilde{\sigma}b_{J+1}^2-\frac{1}{4}b_{J+1}^3
\end{equation*}
(which follows by substituting $\widehat{b}_{J+1}=\widetilde{\sigma}-\frac{1}{2}b_{J+1}$ in $d_{J+1}=\widehat{b}^3_{J+1}+\frac{1}{4}\widehat{b}_{J+1}b_{J+1}^2$)
in the other terms
we obtain (to be verified  using a computer algebra system, see
 Appendix~\ref{apdx:maplecheck2})
\begin{eqnarray}
&&(\widetilde{\sigma}^3\widetilde{e}-\widetilde{d})^T\widetilde{S}^{-1}(\widetilde{\sigma}^3\widetilde{e}-\widetilde{d})
\ = \ (\sigma^3e-d)^TS^{-1}(\sigma^3e-d)\label{eq:expr4}\\
&&\quad+\frac{b_{J+1}}{4}\big(
7b_{J+1}^4-36b_{J+1}^3\widetilde{\sigma}+72b_{J+1}^2\widetilde{\sigma}^2-72b_{J+1}\widetilde{\sigma}^3+36\widetilde{\sigma}^4\big)\nonumber\\
&&\quad-\frac{b_{J+1}}{4+b_{J+1}e^TSe}\Big(
\big(\widetilde{\sigma}-b_{J+1})^3e-d\big)^TS^{-1}e
-\frac{1}{2}\big(5b_{J+1}^2-12b_{J+1}\widetilde{\sigma} +6\widetilde{\sigma}^2\big)
\Big)^2. \nonumber
\end{eqnarray}
Disregarding the last term (which is negative) and using the inductive assumption
$(\sigma^3e-d)^TS^{-1}(\sigma^3e-d)<\frac{9}{5}\sigma^5=\frac{9}{5}(\widetilde{\sigma}-b_{J+1})^5$ we obtain
\begin{eqnarray*}
&&(\widetilde{\sigma}^3\widetilde{e}-\widetilde{d})^T\widetilde{S}^{-1}(\widetilde{\sigma}^3\widetilde{e}-\widetilde{d}) \\
&&\qquad< \ \frac{9}{5}(\widetilde{\sigma}-b_{J+1})^5 \\
&&\qquad\qquad
+\frac{b_{J+1}}{4}\big(
7b_{J+1}^4-36b_{J+1}^3\widetilde{\sigma}+72b_{J+1}^2\widetilde{\sigma}^2-72b_{J+1}\widetilde{\sigma}^3+36\widetilde{\sigma}^4\big)\\
&&\qquad= \ \frac{9}{5}\widetilde{\sigma}^5-\frac{1}{2}b_{J+1}^5
\ < \ \frac{9}{5}\widetilde{\sigma}^5,
\end{eqnarray*}
which completes the inductive step for the proof of (\ref{eq:shitty_ineq}). \hfill $\Box$
\end{proof}

\noindent\rev{\emph{Remark:\/} Note that the theorem also covers the situation where (some) $a_{j,k}\in \mathbb{C}$, but $b_j,\, y_j \in \mathbb{R}$.}

\appendix
\section{Proof of the order conditions (\ref{eq:sum_b})--(\ref{eq:eq_quad})}
\label{apdx:proof_order_conditions}
For the global error to have order $p=5$ it is required that the local error
have convergence order $p+1=6$. If, without restriction of generality, we consider only
the first integration step for the special problem (\ref{eq:special_problem}), this condition for the local error is written as
\begin{equation}\label{eq:local_error}
\ee^{\tau b_J A_0+\tau^2 y_J A_1}\cdots\ee^{\tau b_1 A_0+\tau^2 y_1 A_1}u_0
= u(\tau) +O(\tau^6).
\end{equation}
A Taylor expansion of the left-hand side leads to
\rev{(let $\mathbf{k}=(k_1,\dots,k_m), \ |\mathbf{k}| =\sum_{l=1}^m k_l$)}
\begin{equation*}
\ee^{\tau b_J A_0+\tau^2 y_J A_1}\cdots\ee^{\tau b_1 A_0+\tau^2 y_1 A_1}u_0
= \rev{c^{(J)}_\emptyset u_0+}  \hspace*{-10mm}\sum_{\rev{\mathbf{k}=}(k_1,\dots k_m)\atop m\geq \rev{1}, k_l\in\{0,1\}, \rev{|\mathbf{k}|+m}\leq 5 }
\hspace{-12mm}\tau^{\rev{|\mathbf{k}|+m}}
c^{(J)}_{k_1\dots k_m}A_{k_1}\cdots A_{k_m}u_0 + O(\tau^6).
\end{equation*}
Here  for $J=1$ we have \rev{(note that already a subset of
coefficients suffices to derive the order conditions \eqref{eq:sum_b}--\eqref{eq:eq_quad})},
\begin{eqnarray*}
&&c^{(1)}_\emptyset=1,\quad c^{(1)}_{0}=b_1,\quad c^{(1)}_{1}=y_1,\quad
c^{(1)}_{01}=\frac{1}{2}b_1y_1,\quad c^{(1)}_{11}=\frac{1}{2}y_1^2,\\
&&c^{(1)}_{001}=\frac{1}{6}b_1^2y_1,\quad c^{(1)}_{011}=\frac{1}{6}b_1y_1^2,
\quad c^{(1)}_{0001}=\frac{1}{24}b_1^3y_1,
\end{eqnarray*}
and for $J\geq 2$ the coefficients can be computed recursively,
\begin{eqnarray*}
&&c^{(J)}_\emptyset=c^{(J-1)}_\emptyset,\quad
c^{(J)}_{0}=c^{(J-1)}_{0}+ b_Jc^{(J-1)}_\emptyset,\quad
c^{(J)}_{1}=c^{(J-1)}_{1}+ y_Jc^{(J-1)}_\emptyset,\\
&&c^{(J)}_{01}=c^{(J-1)}_{01}+b_Jc^{(J-1)}_{1} +\frac{1}{2}b_Jy_Jc^{(J-1)}_\emptyset,\quad
c^{(J)}_{11}=c^{(J-1)}_{11}+y_Jc^{(J-1)}_{1} +\frac{1}{2}y_J^2c^{(J-1)}_\emptyset,\\
&&c^{(J)}_{001}=c^{(J-1)}_{001}+b_Jc^{(J-1)}_{01}
+\frac{1}{2}b_J^2c^{(J-1)}_{1}+\frac{1}{6}b_J^2y_Jc^{(J-1)}_\emptyset,\\
&&c^{(J)}_{011}=c^{(J-1)}_{011}+b_Jc^{(J-1)}_{11}
+\frac{1}{2}b_Jy_Jc^{(J-1)}_{1}+\frac{1}{6}b_Jy_J^2c^{(J-1)}_\emptyset,\\
&&c^{(J)}_{0001}=c^{(J-1)}_{0001}+b_Jc^{(J-1)}_{001}
+\frac{1}{2}b_J^2c^{(J-1)}_{01}+\frac{1}{6}b_J^3c^{(J-1)}_{1}
+\frac{1}{24}b_J^3y_Jc^{(J-1)}_\emptyset.
\end{eqnarray*}
An inductive argument involving  straightforward but laborious calculations gives
\begin{eqnarray}
&&c^{(J)}_\emptyset=1,\quad c^{(J)}_{0}= \sum_{j=1}^Jb_j,\quad  c^{(J)}_{1} = \sum_{j=1}^Jy_j,
\quad c^{(J)}_{01} = c^{(J)}_{0}c^{(J)}_{1}-\sum_{j=1}^J\widehat{b}_jy_j,
\quad c^{(J)}_{11} = \frac{1}{2}(c^{(J)}_{1})^2,\nonumber\\
&& c^{(J)}_{001}=c^{(J)}_{0}c^{(J)}_{01}-\frac{1}{2}(c^{(J)}_{0})^2c^{(J)}_{1}
-\frac{1}{2}\sum_{j=1}^J\big(\widehat{b}_{j}^2+\frac{1}{12}b_{j}^2\big)y_{j},
\quad c^{(J)}_{011} =\frac{1}{2}\sum_{j=1}^{J}\big(\widehat{y}_{j}^2+\frac{1}{12}y_{j}^2\big)b_{j},\nonumber\\
&& c^{(J)}_{0001} = c^{(J)}_{0}c^{(J)}_{001}- \frac{1}{2}(c^{(J)}_{0})^2c^{(J)}_{01}
+\frac{1}{6}(c^{(J)}_{0})^3c^{(J)}_{1}-\frac{1}{6}\sum_{j=1}^{J}\big(\widehat{b}_{j}^3+\frac{1}{4}\widehat{b}_{j}b_{j}^2\big)y_{j}.\label{eq:coeffs_c}
\end{eqnarray}

Repeated differentiation of the differential equation (\ref{eq:special_problem}) yields
\begin{eqnarray*}
&&u(0)=u_0, \quad u'(0) = A_0u_0, \quad u''(0) = (A_1+A_0^2)u_0,\quad
u'''(0) = (A_0A_1+2A_1A_0+A_0^3)u_0,\\
&&u^{(4)}(0) = (3A_1^2+A_0^2A_1+2A_0A_1A_0+A_1A_0^2+A_0^4)u_0,\\
&&u^{(5)}(0) = (3A_0A_1^2+4A_1A_0A_1+8A_1^2A_0+A_0^3A_1+2A_0^2A_1A_0+3A_0A_1A_0^2
+4A_1A_0^3+A_0^5)u_0.
\end{eqnarray*}
Thus for the Taylor expansion of the right-hand side of (\ref{eq:local_error}) we obtain
\begin{equation*}
u(\tau) = \sum_{q=0}^5\frac{\tau^q}{q!}u^{(q)}(0) + O(\tau^6) =
\rev{s_{\emptyset}u_0 +} \hspace{-7mm}\sum_{\rev{\mathbf{k}=}(k_1,\dots k_m)\atop m\geq \rev{1}, k_l\in\{0,1\}, \rev{|\mathbf{k}|+m} \leq 5 }
\hspace{-9mm}\tau^{\rev{|\mathbf{k}|+m}}
\rev{s}_{k_1\dots k_m}A_{k_1}\cdots A_{k_m}u_0 + O(\tau^6)
\end{equation*}
with coefficients (only those corresponding to the subset of coefficients as in (\ref{eq:coeffs_c}))
\begin{eqnarray}
&&s_{\emptyset}=\frac{1}{0!}=1,\quad s_0=\frac{1}{1!}=1,\quad s_1=\frac{1}{2!}=\frac{1}{2},
\quad s_{01}=\frac{1}{3!}=\frac{1}{6},\quad s_{11}=\frac{3}{4!}=\frac{1}{8},\nonumber\\
&&s_{001}=\frac{1}{4!}=\frac{1}{24},\quad s_{011}=\frac{3}{5!}=\frac{1}{40},
\quad s_{0001}=\frac{1}{5!}=\frac{1}{120}.\label{eq:coeffs_s}
\end{eqnarray}
Equating corresponding coefficients in (\ref{eq:coeffs_c}) and (\ref{eq:coeffs_s})
leads to the order conditions (\ref{eq:sum_b})--(\ref{eq:eq_quad}).

\section{A geometric lemma}\label{apdx:geometric_lemma}
\begin{lemma}\label{lemma:geometric}
Let $\{a_1,\dots,a_m\}$ be a linearly independent set of vectors in $\mathbb{R}^n$ and $S\in\mathbb{R}^{n\times n}$ symmetric positive
definite. Further let $c=(\gamma_1,\dots,\gamma_m)^T\in\mathbb{R}^m$ and $\delta\in\mathbb{R}$.
Then the intersection $\mathcal{I}$ of the $m$ hyperplanes in $\mathbb{R}^n$ given by the equations
$a_1^Tx=\gamma_1,\dots,a_m^Tx=\gamma_m$ intersects the hyper-ellipsoid $\mathcal{Q}$ given by the equation
$x^TSx=\delta$ if and only if it holds
\begin{equation}\label{eq:inter_crit}
c^T\Gamma^{-1}c \leq \delta,
\end{equation}
where $\Gamma = (a_i^TS^{-1}a_j)_{i,j=1}^m$ denotes the Gram matrix of the vectors $a_1,\dots,a_m$ with respect to the scalar product $x^TS^{-1}y$.
\end{lemma}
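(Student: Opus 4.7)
My plan is to recast the statement as a constrained optimization problem. The affine subspace $\mathcal{I} = \{x : a_i^Tx = \gamma_i,\ i=1,\dots,m\}$ meets the level surface $\mathcal{Q} = \{x : x^TSx = \delta\}$ precisely when the minimum value of $x^TSx$ on $\mathcal{I}$ is at most $\delta$; so the whole lemma reduces to computing that constrained minimum and identifying it as $c^T\Gamma^{-1}c$.

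First I would assemble the vectors $a_1,\dots,a_m$ as the columns of a matrix $A\in\mathbb{R}^{n\times m}$. By linear independence $A$ has full column rank, so the constraint $A^Tx = c$ defines a nonempty affine subspace of dimension $n-m$. Next I would compute $\min_{x\in\mathcal{I}} x^TSx$. The cleanest route is the change of variables $y = S^{1/2}x$, which turns the problem into minimizing $\|y\|_2^2$ subject to $B^Ty = c$ with $B = S^{-1/2}A$; the minimizer is the orthogonal projection $y^\ast = B(B^TB)^{-1}c$, and its squared norm is
\begin{equation*}
c^T(B^TB)^{-1}c \;=\; c^T(A^TS^{-1}A)^{-1}c \;=\; c^T\Gamma^{-1}c.
\end{equation*}
Equivalently one obtains the same expression via Lagrange multipliers on $x^TSx - 2\lambda^T(A^Tx - c)$: the stationarity condition $Sx = A\lambda$ gives $x = S^{-1}A\lambda$, substitution into $A^Tx=c$ yields $\Gamma\lambda = c$, and the minimum value is $\lambda^T\Gamma\lambda = c^T\Gamma^{-1}c$.

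To finish, I would note that since $S$ is positive definite, the quadratic $x^TSx$ is strictly convex on $\mathcal{I}$, so its minimum $c^T\Gamma^{-1}c$ is achieved. Moreover, on the (positive-dimensional) affine subspace $\mathcal{I}$, this quadratic is continuous and unbounded above along any nonzero direction in $\ker A^T$, hence by the intermediate value theorem it attains every value in the interval $[c^T\Gamma^{-1}c,\infty)$. Therefore $\mathcal{I}\cap\mathcal{Q}\neq\emptyset$ if and only if $\delta \ge c^T\Gamma^{-1}c$, which is \eqref{eq:inter_crit}.

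I do not expect any genuine obstacle: the core computation is the standard formula for the $S$-norm distance from the origin to an affine subspace. The only subtlety worth a sentence is the edge case $n=m$, where $\mathcal{I}$ degenerates to a single point; then the ``unbounded above'' argument fails, but the direction needed in the main theorem (namely, $c^T\Gamma^{-1}c > \delta \Rightarrow \mathcal{I}\cap\mathcal{Q} = \emptyset$) remains valid because that single point realizes the value $c^T\Gamma^{-1}c \neq \delta$.
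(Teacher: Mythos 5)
Your proposal is correct and follows essentially the same route as the paper: the paper also reduces to the Euclidean case via a square root of $S$ (writing $S=X\Lambda X^T$ and substituting $\widetilde{x}=\Lambda^{1/2}X^Tx$, which is your $y=S^{1/2}x$) and identifies the minimum of the quadratic form on $\mathcal{I}$ as $c^T\Gamma^{-1}c$ by projecting onto the span of the normal vectors. If anything, you are more careful than the paper on the ``if'' direction (the intermediate-value argument along $\ker A^T$) and on the degenerate case $n=m$, where the stated equivalence can fail but the implication actually used in the main theorem survives.
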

\begin{proof}
First we consider the special case $S=I_n$ (identity matrix), where
$\mathcal{Q}$ is a hyper-sphere. In  this case $\mathcal{I}$ intersects $\mathcal{Q}$
 if and only if the point $x_*\in\mathcal{I}$ of minimal norm satisfies
\begin{equation}\label{eq:inter_crit1}
 \|x_*\|^2=x_*^Tx_* \leq \delta.
\end{equation}
It is easy to see that this point $x_*$ lies in the linear subspace of $\mathbb{R}^n$ spanned
by $a_1, \dots a_m$ (the normal vectors to the given hyperplanes), i.e., there exists $b=(\beta_1,\dots,\beta_m)^T\in\mathbb{R}^m$ such that
$$ x_* = \beta_1a_1+\dots +\beta_ma_m = Ab,$$
where $A=[a_1\ \cdots \ a_m]\in\mathbb{R}^{n\times m}$. Because $x_*\in\mathcal{I}$ it holds
\begin{equation*}
\Gamma b = A^TAb  = A^Tx_*=c,
\end{equation*}
and thus
\begin{equation*}
 x_*^Tx_* = b^TA^TAb = b^T\Gamma b = c^T\Gamma^{-1}c,
\end{equation*}
which shows that (\ref{eq:inter_crit1}) is equivalent to (\ref{eq:inter_crit}).
This completes the proof for the special case $S=I_n$.

For the general case, the symmetric positive definite matrix $S$ can be written as
\begin{equation*}
  S=X\Lambda X^T
\end{equation*}
with $\Lambda=\mathrm{diag}(\lambda_1,\dots,\lambda_n)$, where  $\lambda_j>0$ are the
eigenvalues of $S$, and $X$ orthogonal.
We define $\widetilde{a}_j=\Lambda^{-1/2}X^Ta_j$, $j=1,\dots,m$.
Then under the transformation of variables $\widetilde x=\Lambda^{1/2}X^Tx$, the equation
$\widetilde{a}_j^T\widetilde{x}=\gamma_j$ is equivalent to $a_j^Tx=\gamma_j$
and $\widetilde{x}^T\widetilde{x}=\delta$ is equivalent to $x^TSx=\delta$.
For these transformed equations the special case from above is applicable.
Using
$\widetilde{A} = [\widetilde{a}_1\ \cdots \ \widetilde{a}_m] = \Lambda^{-1/2}X^TA$
with $A=[a_1\ \cdots \ a_m]$
it follows
that for the transformed equations the corresponding Gram matrix satisfies
$\widetilde{\Gamma}=\widetilde{A}^T\widetilde{A}=A^TX\Lambda^{-1}X^TA=A^TS^{-1}A = \Gamma$
as claimed. \hfill $\Box$
\end{proof}

\section{Maple code for checking (\ref{eq:expr3})$=$(\ref{eq:expr4})}\label{apdx:maplecheck2}
Here, the Maple identifiers \verb|s|, \verb|s1|, \verb|bb|, \verb|dd|
\verb|eSe|, \verb|eSd|, \verb|dSd|
correspond to $\sigma$, $\widetilde{\sigma}$, $b_{J+1}$, $d_{J+1}$, $e^TS^{-1}e$, $e^TS^{-1}d$, $d^TS^{-1}d$, respectively.

\begin{verbatim}
> expr3 := s1^6*eSe-2*s1^3*eSd+dSd+bb*(-(eSe*s1^3-eSd)^2
           -(12*(s1^3-dd))*(eSe*s1^3-eSd)/bb
           +12*(s1^3-dd)^2*(bb*eSe+1)/bb^2)/(bb*eSe+4):
> expr4 := s^6*eSe-2*s^3*eSd+dSd
           +(1/4)*bb*(7*bb^4-36*bb^3*s1+72*bb^2*s1^2
           -72*bb*s1^3+36*s1^4)-bb*((s1-bb)^3*eSe-eSd
           -1/2*(5*bb^2-12*bb*s1+6*s1^2))^2/(bb*eSe+4):
> simplify( subs(dd = s1^3-(3/2)*s1^2*bb+s1*bb^2-(1/4)*bb^3, expr3)
           -subs(s = s1-bb, expr4));
                                      0
\end{verbatim}



\begin{thebibliography}{10}
\providecommand{\url}[1]{{#1}}
\providecommand{\urlprefix}{URL }
\expandafter\ifx\csname urlstyle\endcsname\relax
  \providecommand{\doi}[1]{DOI~\discretionary{}{}{}#1}\else
  \providecommand{\doi}{DOI~\discretionary{}{}{}\begingroup
  \urlstyle{rm}\Url}\fi

\bibitem{alvfeh11}
Alverman, A., Fehske, H.: High-order commutator-free exponential
  time-propagation of driven quantum systems.
\newblock J. Comput. Phys. \textbf{230}, 5930--5956 (2011)

\bibitem{alvermanetal12}
Alverman, A., Fehske, H., Littlewood, P.: Numerical time propagation of quantum
  systems in radiation fields.
\newblock New J. Phys. \textbf{14}, 105,008 (2012)

\bibitem{kropi14}
Bader, P., Iserles, A., Kropielnicka, K., Singh, P.: Effective approximation
  for the linear time-dependent {S}chr{\"o}dinger equation.
\newblock Found. Comput. Math. \textbf{14}, 689--720 (2014)

\bibitem{baderetal17}
Bader, P., Iserles, A., Kropielnicka, K., Singh, P.: Efficient methods for
  linear {S}chr{\"o}dinger equation in the semiclassical regime with
  time-dependent potential.
\newblock Proc. R. Soc. A \textbf{472}, 20150,733 (2016)

\rev{\bibitem{BCCM2011}
Blanes, S., Casas, F., Chartier, P., Murua, A.: Optimized high-order splitting
  methods for some classes of parabolic equations.
\newblock Math. Comp. \textbf{82}, 1559--1576 (2013)}

\bibitem{blanesetal08b}
Blanes, S., Casas, F., Oteo, J., Ros, J.: The {M}agnus expansion and some of
  its applications.
\newblock Phys. Rep. \textbf{470}, 151--238 (2008)

\bibitem{blacastha17}
Blanes, S., Casas, F., Thalhammer, M.: High-order commutator-free
  quasi--{M}agnus integrators for non-autonomous linear evolution equations.
\newblock Comput. Phys. Commun. \textbf{220}, 243--262 (2017)

\bibitem{SergioFernandoMPaper1}
Blanes, S., Casas, F., Thalhammer, M.: Convergence analysis of high-order
  commutator-free quasi-{M}agnus exponential integrators for nonautonomous
  linear evolution equations of parabolic type.
\newblock IMA J. Numer. Anal. \textbf{38}, 743--778 (2018)

\bibitem{blamoa05}
Blanes, S., Moan, P.: Fourth- and sixth-order commutator-free {M}agnus
  integrators for linear and non-linear dynamical systems.
\newblock Appl. Numer. Math. \textbf{56}, 1519--1537 (2005)

\rev{\bibitem{Castella09}
Castella, F., Chartier, P., Descombes, S., Vilmart, G.: Splitting methods with
  complex times for parabolic equations.
\newblock BIT Numer. Math. \textbf{49}, 487--508 (2009)}

\rev{\bibitem{celledoni05}
Celledoni, E.: {E}ulerian and semi-{L}agrangian schemes based on
  commutator-free exponential integrators.
\newblock In: Group Theory and Numerical Analysis, \emph{CRM Proceedings \&
  Lecture Notes}, vol.~39, pp. 77--90 (2005)}

\rev{\bibitem{celmarowr03}
Celledoni, E., Marthinsen, A., Owren, B.: Commutator-free {L}ie-group methods.
\newblock Future Gen. Comput. Syst. \textbf{19}(3), 341--352 (2003)}

\rev{\bibitem{golkap96}
Goldman, D., Kaper, T.: $n$th-order operator splitting schemes and
  nonreversible systems.
\newblock SIAM J. Numer. Anal. \textbf{33}(1), 349--367 (1996)}

\rev{\bibitem{Hansen09}
Hansen, E., Ostermann, A.: High order splitting methods for analytic semigroups
  exist.
\newblock BIT Numer. Math. \textbf{49}, 527--542 (2009)}

\bibitem{horn85}
Horn, R., Johnson, C.: Matrix Analysis.
\newblock Cambridge Univ.~Press, Cambridge (1985)

\rev{\bibitem{isenor99}
Iserles, A., N{\o}rsett, S.: On the solution of linear differential equations
  on {L}ie groups.
\newblock Phil. Trans. R. Soc. Lond. A \textbf{357}, 983--1019 (1999)}

\rev{\bibitem{magnus54}
Magnus, W.: On the exponential solution of differential equations for a linear
  operator.
\newblock Comm. Pure Appl. Math. \textbf{7}, 649--673 (1954)}

\rev{\bibitem{molloa03}
Moler, C., Van~Loan, C.: Nineteen dubious ways to compute the exponential of a
  matrix, twenty-five years later.
\newblock SIAM Rev. \textbf{45}(1), 3--000 (2003)}

\rev{\bibitem{owren06}
Owren, B.: Order conditions for commutator-free {L}ie group methods.
\newblock J. Phys. A: Math. Gen. \textbf{39}, 5585--5599 (2006)}

\rev{\bibitem{parlig86}
Park, T., Light, J.: Unitary quantum time evolution by iterative {L}anczos
  reduction.
\newblock J. Chem. Phys. \textbf{85}, 5870--5876 (1986)}

\rev{\bibitem{saad92s}
Saad, Y.: Analysis of some {K}rylov subspace approximations to the matrix
  exponential operator.
\newblock SIAM J. Numer. Anal. \textbf{29}(1), 209--228 (1992)}

\rev{\bibitem{sheng86}
Sheng, Q.: Solving linear partial differential equations by exponential
  splittings.
\newblock IMA J. Numer. Anal. \textbf{9}(2), 199--212 (1989)}

\end{thebibliography}

\end{document}